\newcommand\Z{{\mathbf Z}}
\newcommand\R{\mathbf R}
\newcommand\C{\mathbf C}
\newcommand\bH{{\mathbf H}}
\newcommand\bF{{\mathbf F}}
\newcommand\SL{\operatorname{\sf SL}}
\newcommand\SO{\operatorname{\sf SO}}
\newcommand\Sp{\operatorname{\sf Sp}}
\newcommand\SU{\operatorname{\sf SU}}
\renewcommand\S{\operatorname{\sf S}}
\renewcommand\O{\operatorname{\sf O}}
\newcommand\U{\operatorname{\sf U}}
\newcommand\so{{\mathfrak{so}}}
\newcommand\su{{\mathfrak{su}}}
\renewcommand\sp{{\mathfrak{sp}}}
\renewcommand\k{{\mathfrak{k}}}
\newcommand\fsl{{\mathfrak{sl}}}
\newcommand\op{{\oplus}}
\newcommand\calB{{\mathcal B}}
\renewcommand\a{\mathfrak a}
\newcommand\g{{\mathfrak{g}}}
\newcommand\m{{\mathfrak{m}}}
\newcommand\n{{\mathfrak{n}}}
\newcommand\p{{\mathfrak{p}}}
\newcommand\q{{\mathfrak{q}}}
\newcommand\s{{\mathfrak{s}}}
\newcommand\gl{{\mathfrak{gl}}}
\newcommand\ip{{\langle\,,\,\rangle}}
\newcommand\isom{{\text{Isom}}}
\newcommand\tr{\operatorname{tr}}
\newcommand\diag{\operatorname{diag}}
\newcommand\ad{\operatorname{ad}}
\newcommand\Id{\operatorname{Id}}
\newcommand\ric{\operatorname{Ric}}
\theoremstyle{plain}
\newtheorem{theorem}{Theorem}
\newtheorem{lemma}[theorem]{Lemma}
\newtheorem{proposition}[theorem]{Proposition}
\theoremstyle{definition}
\newtheorem*{definition}{Definition}
\newtheorem*{example}{Example}
\theoremstyle{remark}
\newtheorem*{remark}{Remark}
\numberwithin{equation}{section}
\numberwithin{theorem}{section}
\begin{document}
\title[New Einstein Metrics]
{New examples of non-symmetric Einstein solvmanifolds of negative Ricci curvature}

\subjclass[2000]{Primary: 53C30}

\author[M. M. Kerr]{Megan M. Kerr}
\address{Department of Mathematics, Wellesley College, 106 Central St.,
Wellesley, MA 02481} \email{mkerr@wellesley.edu}

\date{\today}

\begin{abstract}
We obtain new examples of  non-symmetric Einstein solvmanifolds by combining two techniques.  
 In \cite{T2}, H.~Tamaru  constructs 
new {\em attached} solvmanifolds, which are submanifolds of the solvmanifolds corresponding to noncompact symmetric spaces, endowed with a natural metric.  
Extending this construction,  we  apply it to {\em associated} solvmanifolds, described in \cite{GK}, %by C.~S.~Gordon and the author, 
obtained by  modifying the algebraic structure of the solvable Lie algebras  corresponding to noncompact symmetric spaces.   
Our new examples are  Einstein solvmanifolds with nilradicals of high nilpotency, which are geometrically distinct from noncompact symmetric spaces and their submanifolds. 
\end{abstract}
\maketitle

%%%%%%%%%%%%%%%%%%%%%%%%%%%%%%%%%%%%%%%%%%%%%%%%%%%%%%%%%%%%%%%%%%%%%%%%%%%%%%%%

\section{Introduction} 
In this paper we construct new examples of solvmanifolds of constant negative Ricci curvature.  These spaces provide many explicit new examples of homogeneous Einstein manifolds that are neither the solvmanifolds corresponding to noncompact symmetric spaces, nor their submanifolds.  We obtain our examples by extending the method of H.~Tamaru in \cite{T2}, in which, via parabolic subalgebras of semisimple Lie algebras, he builds solvable subalgebras  by choosing a subset $\Lambda'$ of the set $\Lambda$ of simple roots. Tamaru proves that the solvable subalgebra of the restricted root system, $\s_{\Lambda'}$, given a natural inner product, called an {\em attached} solvmanifold, is in fact an Einstein solvmanifold. 
We combine this with a method introduced by C.~S.~Gordon and the author in \cite{GK} to construct Einstein solvmanifolds which are {\em associated}  (but not isometric) to the Einstein solvable Lie groups corresponding to higher rank, irreducible symmetric spaces of noncompact type.  

Tamaru's construction yields solvmanifolds which are naturally homogeneous submanifolds of symmetric spaces of noncompact type; however,  they generally are {\em not}  totally geodesic subalgebras.  
When Tamaru's method is extended to construct  the Einstein solvmanifolds here, which are {\em attached} to {\em associated} solvmanifolds,  we get completely new examples.  We show, by isometry groups, that they cannot be submanifolds of symmetric spaces. 
Furthermore, we show that while our examples have constant negative Ricci curvature, they admit two-planes of positive sectional curvature.  

The category of solvmanifolds is a natural place to look for Einstein manifolds of negative Ricci curvature.  All known examples of homogeneous Einstein manifolds of negative Ricci curvature are Riemannian solvmanifolds. Indeed, in 1975 Alekseevskii conjectured that every noncompact homogeneous Einstein manifold is a solvmanifold  \cite{Al}.  There has been much recent progress toward determining existence of Einstein manifolds of negative Ricci curvature, and more generally, Ricci solitons (e.g. \cite{J1,J2, J3, L1, L2, L3, LW, N, P}). 

It is a pleasure to thank Carolyn Gordon for helpful discussions. 
%%%%%%%%%%%%%%%%%%%%%%%%%%%%%%%%%%%%%%%%%%%%%%%%%%%%%%%%%%%%%%%%%%%%%%%%%%%%%%%%
\section{Preliminaries} 
A solvable Lie group $S$  together with a left-invariant metric $g$ is called a {\em Riemannian solvmanifold}. A {\em metric Lie algebra} is a Lie algebra endowed with an inner product. 
The left-invariant metric $g$ on a solvmanifold $S$ defines an inner product $\ip$ on the Lie algebra $\s$ of $S$. Conversely, an inner product on a Lie algebra $\s$ defines a unique left-invariant metric on $S$, the corresponding simply connected solvable Lie group.   Given the  one-to-one correspondence between simply connected Riemannian solvmanifolds and solvable metric Lie algebras,  it makes sense to work at the Lie algebra level.  

\begin{definition} Let $(S,g)$ be a simply connected Riemannian solvmanifold, and let $(\s, \ip)$ be the corresponding solvable metric Lie algebra. Write $\s = \a \oplus \n$, where $\n$ is the nilradical of $\s$. We say the metric Lie algebra $(\s, \ip)$ and the solvmanifold $(S,g)$ are of {\em Iwasawa type} if \\
(i)  $\a$ is abelian, \\
(ii) for all $A \in\a$, $(\ad A)$ is symmetric relative to the inner product $\ip$ on $\s$, \\
(iii) if $A \neq 0$, $(\ad A) \neq 0$, and \\
(iv) for some $A_0 \in \a$, the restriction $(\ad A_0)|_{\n}$  is positive definite on $\n$. 
\end{definition}

\begin{definition} We say two metric Lie algebras are {\em isomorphic} if there exists a Lie algebra isomorphism $\varphi:\s \to \s'$ which is also an inner product space isometry.  
\end{definition}
In what follows, all solvmanifolds will be of Iwasawa type, with real roots. This guarantees our solvmanifolds are in standard position within the identity component of the isometry group of $S$ \cite{GW}.  In this case, the question of when two solvmanifolds are isometric is answered at the metric Lie algebra level. 
\begin{theorem}\cite[5.2]{GW} Let $(\s, \ip)$ and $(\s',\ip')$ be data in standard position for simply connected Riemannian  solvmanifolds $S$ and $S'$, respectively. Then $S'$ is isometric to $S$ if and only if their corresponding metric Lie algebras are isomorphic. 
\end{theorem}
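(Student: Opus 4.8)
The plan is to treat the two implications separately; the forward direction is essentially the integration of an infinitesimal datum, while the converse rests on the structure theory of isometry groups of solvmanifolds developed in \cite{GW}.

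For the easy direction, suppose $\varphi : \s \to \s'$ is an isomorphism of metric Lie algebras, i.e.\ a Lie algebra isomorphism that is simultaneously a linear isometry of $(\s,\ip)$ onto $(\s',\ip')$. Since $S$ and $S'$ are simply connected, $\varphi$ integrates to a Lie group isomorphism $\Phi : S \to S'$ with $d\Phi_e = \varphi$. Because the metrics are left-invariant and agree with $\ip$, $\ip'$ at the identity, and because $\Phi$ intertwines left translations, $\Phi \circ L_x = L_{\Phi(x)} \circ \Phi$, the fact that $d\Phi_e$ is a linear isometry propagates to every tangent space. Hence $\Phi$ is a Riemannian isometry and $S$ is isometric to $S'$.

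For the converse, suppose $F : S \to S'$ is a Riemannian isometry. First I would normalize $F$: composing with the left translation $L_{F(e)^{-1}}$ of $S'$, which is itself an isometry, we may assume $F(e) = e'$. Conjugation by $F$ then carries $\isom(S)$ isomorphically onto $\isom(S')$ and the isotropy subgroup at $e$ onto that at $e'$; on Lie algebras it yields an isomorphism $F_* : \Lie(\isom(S)) \to \Lie(\isom(S'))$ respecting the isotropy subalgebras. Inside $\Lie(\isom(S))$ the subalgebra $\s$ appears as the Lie algebra of a simply transitive solvable group of isometries, and likewise for $\s'$ inside $\Lie(\isom(S'))$. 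The crux is then to compare $F_*\s$ with $\s'$. Using the Gordon--Wilson analysis of $\Lie(\isom(S))$, one shows that the standard position hypothesis makes $\s$ canonical up to conjugation by isotropy: the nilradical $\n$ is determined intrinsically, the splitting $\s = \a \op \n$ is recovered from the requirement that the $\a$-part act by \emph{symmetric} derivations on $\n$, and any two transitive solvable subalgebras in standard position are conjugate by an element $k$ of the isotropy group. Applying this to $F_*\s$ and $\s'$ produces such a $k$, and the composite $\Ad(k)\circ F_*$ restricts to a Lie algebra isomorphism $\s \to \s'$. Since $F$ is an isometry and isotropy elements act orthogonally on the tangent space at $e'$, this composite is also a linear isometry $(\s,\ip) \to (\s',\ip')$, hence an isomorphism of metric Lie algebras.

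The main obstacle is precisely this canonical recovery of $\s$: proving that a solvmanifold in standard position admits, up to conjugation by isotropy, a unique simply transitive solvable group of isometries. This is the technical heart of \cite{GW}; it requires a careful decomposition of $\Lie(\isom(S))$ together with the rigidity supplied by the Iwasawa-type conditions (ii)--(iv), which pin down $\a$ as the orthogonal complement of $\n$ on which $\ad$ acts by commuting symmetric operators. Granting that structural uniqueness, the two directions above combine to give the stated equivalence.
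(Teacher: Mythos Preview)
The paper does not prove this theorem at all: it is quoted verbatim from \cite[5.2]{GW} and used as a black box, so there is no ``paper's own proof'' to compare against. Your forward implication is correct and complete. Your converse is a faithful outline of the Gordon--Wilson strategy, but as written it is not a proof: you explicitly defer the decisive step---that two simply transitive solvable subalgebras in standard position inside $\Lie(\isom(S'))$ are conjugate by an isotropy element---to ``the technical heart of \cite{GW},'' which is precisely the reference from which the theorem is being cited. So the argument is circular unless you actually carry out that structural uniqueness (the normal-modification and standard-position machinery of \cite{GW}). If your intent was only to sketch why the result is plausible and point to \cite{GW} for the substance, that matches exactly what the present paper does; if your intent was to supply an independent proof, the gap is the conjugacy statement, and filling it requires reproducing a nontrivial portion of \cite{GW}.
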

We will use this  to verify that our examples are distinct from previously described solvmanifolds.
%%%%%%%%%%%%%%%%%%%%%%%%%%%%%%%%%%%%%%%%%%%%%%%%%%%%%%%%%%%%%%%%%%%%%%%%%%%%%%%%
\subsection{Symmetric spaces and subalgebras}   

Some solvmanifolds correspond to symmetric spaces of non-compact type in the following way (see Helgason \cite{Hel} for more details). Given a noncompact symmetric space $M$, let $G = \isom(M)$, the full isometry group. The isotropy subgroup $K$ at a point of $M$ is a maximal compact subgroup of $G$, so that $M \cong G/K$.  
The Lie algebra $\g$ of $G$ admits an Iwasawa decomposition $\g=\k +\a +\n$ where $\k$ is the Lie algebra of $K$, $\a$ is abelian, $\n$ is nilpotent.  We note that  the solvable Lie subgroup $S$ of $G$, with Lie algebra  $\s:=\a+\n$, acts simply transitively on $M$,  so that we may identify $M$ with the solvmanifold  $S$.

Alternatively---equivalently---we may begin with a semisimple Lie algebra $\g$ and a Cartan involution $\sigma$ on $\g$. Via the Cartan involution $\sigma$, we obtain a decomposition $\g = \k + \p$,  where $\sigma|_{\k} = \Id$,  $\sigma|_{\p} = -\Id$. Using the Killing form $B$, we define  $B_{\sigma}(X,Y) := -B(X,\sigma(Y))$, an $\ad_\k$-invariant inner product on $\g$.

Let $\a$ be a maximal torus in $\p$. We let  $\Delta$ denote a root system of $\g$, and we take the root space decomposition $\g = \g_0 + \sum_{\alpha \in \Delta} \g_{\alpha}$, where $\g_{\alpha} = \{X \in \g \mid (\ad A)X =\alpha(A) X \text{ for all } A\in\a\}$,  
$\g_0$ is the centralizer of $\a$ in $\g$.  Denote by  $\Delta^+$ the set of positive roots and take $\Lambda \subset  \Delta^+$ to be a set of simple roots.  Using $\Delta^+$, we define $\n := \sum_{\alpha \in \Delta^+} \g_{\alpha}$, the nilradical of $\g$.  Define $\s := \a + \n$, endowed with this inner product  induced from $B_{\sigma}$: $\ip = 2B_{\sigma}|_{\a \times \a} + B_{\sigma}|_{\n \times \n}$.  It is straightforward to see that the corresponding simply connected solvmanifold is a symmetric space, and 
the {\em rank} of the symmetric space is $|\Lambda |$, the number of elements in $\Lambda$ (cf. \cite[p.532]{Hel}). 

%%%%%%%%%%%%%%%%%%%%%%%%%%%%%%%%%%%%%%%%%%%%%%%%%%%%%%%%%%%%%%%%%%%%%%%%%%%%%%%%\subsection{Subalgebras} 

We are ready to describe Tamaru's metric Lie subalgebras $\s_{\Lambda'}$ \cite{T2}. We begin with a description of the subalgebra construction, then we review his proof that the appropriate metric on $\s_{\Lambda'}$ has constant Ricci curvature. 
%so that $S_{\Lambda'}$ is an Einstein solvmanifold.

We have  $\Lambda = \{\alpha_1, \dots, \alpha_r\}$, the set of  simple roots. We take a dual basis $\{H^1, \dots, H^r\}$ in $\a$, so that $\alpha_i(H^j) = \delta_{ij}$.  Choose a {\em characteristic element}, 
$$Z = c_{i_1} H^{i_1} + c_{i_2} H^{i_2} + \dots + c_{i_k} H^{i_k},$$ 
for positive integers $c_{i_1},\dots,c_{i_k}$.  Note that every eigenvalue of $\ad Z$ is an integer  \cite{KA}. For each $j \in \Z$, define $\g^j := \sum_{\alpha(Z) =j} \g_{\alpha}$, summing over  $\alpha \in \Delta$. 
 For any choice of characteristic element $Z$, the eigenspace decomposition $\g = \bigoplus \g^j$ gives a grading of $\g$: 
 $[\g^i, \g^j] \subset \g^{i+j}$ for all $i,j\in\Z$.  
Define $\Lambda' = \{ \alpha_i  \in \Lambda \mid \alpha_i(Z)=0\}$. Equivalently, $\Lambda' = \Lambda \setminus \{\alpha_{i_1}, \dots, \alpha_{i_k}\}$.
%Then $\langle \Lambda'\rangle = \{ \beta \in \Delta \mid \beta(Z) =0\}$. 

Just as $\g_0$ is the centralizer of $\a$ in $\g$, we denote by $\k_0$ the centralizer of $\a$ in $\k$.  The subalgebra $\k_0 \op \a \op \n$ is called a {\em minimal parabolic subalgebra} of $\g$. Any subalgebra $\q \subset \g$ which contains $\k_0 \op \a \op \n$ (up to conjugation)  is a {\em parabolic subalgebra}.  Every parabolic subalgebra can be constructed from a subset  $\Lambda' \subset \Lambda$  \cite[7.76]{Kn}. 
In \cite[3.6]{T2}, Tamaru proves $\q_{\Lambda'} := \sum_{k \geq 0} \g^k$ is a parabolic subalgebra of $\g$.   On $\q_{\Lambda'}$, one has the following Langlands decomposition \cite[3.8]{T2}: 
\begin{itemize}
\item $\a_{\Lambda'} = \text{span}\{H^{i_1}, \dots,H^{i_k}\}$, 
\item $\m_{\Lambda'} = \g^0 \ominus\a_{\Lambda'}$,  
\item $\n_{\Lambda'} = \sum_{k > 0} \g^k$.  
\end{itemize}
Finally, our  solvable metric Lie algebra is $\s_{\Lambda'} := \a_{\Lambda'} + \n_{\Lambda'}$, endowed with the inner product inherited from that of $\g$: 
$\langle ~,~\rangle= 2B_{\sigma}|_{\a_{\Lambda'} \times \a_{\Lambda'}} + B_{\sigma}|_{\n_{\Lambda'} \times \n_{\Lambda'}}.$
%Thus we have our metric solvable Lie algebra $(\s_{\Lambda'}, \ip)$. 
We say the corresponding simply connected solvmanifold $(S_{\Lambda'},g)$ is an {\em attached solvmanifold}.  

\begin{remark} A subset $\Lambda' \subset \Lambda$ is said to be {\em trivial} if $\Lambda'$ and $\Lambda \setminus \Lambda'$ are orthogonal. Provided $\Lambda' \subset \Lambda$  is nontrivial,
the solvmanifold $(S_{\Lambda'},g)$   will not be a totally geodesic submanifold of  $(S_{\emptyset},g)$ \cite[6.4]{T2}. \end{remark}

\begin{proposition} \cite[4.4]{T2} When  $\Lambda' $ is chosen to be the empty set, the attached solvmanifold  $(\s_{\emptyset} = \a_{\emptyset} + \n_{\emptyset},\ip)$   is a symmetric space and an  Einstein manifold.  
\end{proposition}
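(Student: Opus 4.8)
The plan is to show that the choice $\Lambda' = \emptyset$ forces the grading and the Langlands decomposition to degenerate, so that $\s_{\emptyset}$ is literally the solvable metric Lie algebra $\s = \a + \n$ attached to the symmetric space $M = G/K$; the Einstein property is then the classical fact that an irreducible symmetric space of noncompact type is Einstein. There is essentially no analytic content: the argument is an identification, and the only step requiring care is the bookkeeping that confirms the grading collapses as claimed.

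First I would unwind what $\Lambda' = \emptyset$ means for the characteristic element. Since $\Lambda' = \Lambda \setminus \{\alpha_{i_1}, \dots, \alpha_{i_k}\}$, the condition $\Lambda' = \emptyset$ is equivalent to $\{\alpha_{i_1}, \dots, \alpha_{i_k}\} = \Lambda$; that is, $Z = c_1 H^1 + \cdots + c_r H^r$ with all $c_i$ positive integers. Writing an arbitrary root as $\alpha = \sum_i n_i \alpha_i$, one has $\alpha(Z) = \sum_i n_i c_i$. For $\alpha \in \Delta^+$ the integers $n_i$ are nonnegative and not all zero, so $\alpha(Z) > 0$; for $\alpha \in \Delta^-$ one gets $\alpha(Z) < 0$; and $\alpha(Z) = 0$ forces $\alpha = 0$. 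Hence the grading collapses: $\g^0$ reduces to the centralizer $\g_0$ of $\a$ in $\g$, while $\sum_{k>0}\g^k = \sum_{\alpha \in \Delta^+}\g_{\alpha} = \n$. This is the main (though modest) obstacle, in the sense that everything downstream depends on checking that no nontrivial $\m_{\emptyset}$ or extra grading pieces survive.

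With this in hand the Langlands decomposition reads off immediately. We obtain $\a_{\emptyset} = \text{span}\{H^1, \dots, H^r\} = \a$, and $\m_{\emptyset} = \g^0 \ominus \a_{\emptyset} = \g_0 \ominus \a = \k_0$, using that $\g_0 = \a \oplus \k_0$ is a $B_{\sigma}$-orthogonal decomposition since $\a \subset \p$ and $\k_0 \subset \k$ are $B_{\sigma}$-orthogonal. Likewise $\n_{\emptyset} = \n$. Therefore $\s_{\emptyset} = \a_{\emptyset} + \n_{\emptyset} = \a + \n = \s$, and the inherited inner product $2B_{\sigma}|_{\a \times \a} + B_{\sigma}|_{\n \times \n}$ is exactly the inner product $\ip$ placed on $\s$ in the symmetric-space construction; in particular the factor of $2$ on $\a$ matches. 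Thus $(\s_{\emptyset}, \ip)$ coincides with the solvable metric Lie algebra of $M = G/K$, whose simply connected solvmanifold is the symmetric space $M$ as already observed in the construction above.

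It then remains only to record that $M$ is Einstein, and here I would invoke structure theory rather than compute. For a symmetric space of noncompact type the Ricci tensor is a constant multiple of the Killing-form metric on $\p$, by Schur's lemma applied to the (irreducible) isotropy representation, and this multiple is negative after the chosen normalization of $B_{\sigma}$. Since the base symmetric spaces considered here are irreducible, $(\s_{\emptyset}, \ip)$ is Einstein, completing the argument.
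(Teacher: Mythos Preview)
Your argument is correct and is precisely the ``straightforward'' identification the paper alludes to just before stating the proposition; the paper itself does not give a proof here but simply cites \cite[4.4]{T2}, having already noted that $\s = \a + \n$ with the inner product $2B_{\sigma}|_{\a\times\a} + B_{\sigma}|_{\n\times\n}$ corresponds to the symmetric space. One small remark: your appeal to Schur's lemma for the Einstein property requires irreducibility, which you correctly flag; alternatively one can invoke the standard identity $\ric = -\tfrac{1}{2}B|_{\p\times\p}$ valid for any symmetric space of noncompact type with the Killing metric, which avoids that hypothesis.
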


Now, take any nonempty $\Lambda'$. Tamaru proves that 
$\s_{\Lambda'} = \a_{\Lambda'} + \n_{\Lambda'}$ is a subalgebra of $\s_{\emptyset}$, and the Ricci curvature of $\s_{\Lambda'}$ is the same as that of  $\s_{\emptyset}$.  Thus $(\s_{\Lambda'}, \langle ~,~\rangle)$ is an Einstein solvmanifold \cite[5.3]{T2}.  We outline Tamaru's proof below. 

%%%%%%%%%%%%%%%%%%%%%%%%%%%%%%%%%%%%%%%%%%%%%%%%%%%%%%%%%%%%%%%%%%%%%%%%%%%%%%%%
\subsection{Ricci curvature} 
Recall the sectional and Ricci curvature formulae, viewed as  bilinear forms on $\s$ (cf.~\cite{Bes}).    
Let $\{ X_1, \dots, X_n\}$ be an orthonormal basis for $\s$. The Levi-Civita connection $\nabla_X Y$ of $(S,g)$ is given by 
$$\nabla_X Y = \tfrac12 [X,Y] + U(X,Y)$$ 
where $U:\s \times \s \to \s$ is the symmetric bilinear form satisfying
$$\langle U(X,Y),Z\rangle = \tfrac12 \langle [Z,X],Y\rangle + \tfrac12 \langle [Z,Y],X \rangle.$$
 Let $B$ denote the Killing form for $\s$. Then for a pair of orthonormal vectors $X,~Y \in \s$, 
 the sectional curvature of the two-plane spanned by $X$ and $Y$ is given by
\begin{equation*}%\label{sec}
 K(X,Y)= -\tfrac 34 |[X,Y]|^2 - \tfrac 12 \langle [X,[X,Y]],Y\rangle - \tfrac 12 \langle [Y,[Y,X]],X\rangle +|U(X,Y)|^2 
 - \langle U(X,X),U(Y,Y)\rangle.
 \end{equation*}
 The Ricci curvature is given by 
\begin{equation*}%\label{ric}
\ric(X,Y) = -\tfrac 12 \sum_i \langle [X, X_i], [Y, X_i]\rangle - \tfrac 12 B(X,Y) + \tfrac 14 \sum_{i,j} \langle [X_i,X_j],X\rangle \langle [X_i,X_j], Y\rangle - \langle U(X, Y),H_0 \rangle
\end{equation*}
where $H_0 = \sum U(X_i,X_i)$ is the mean curvature vector. 

Since these solvable metric Lie algebras are of Iwasawa type, the Ricci curvature expression simplifies:
\begin{lemma}\cite[1.4]{W} Let $\ric$ denote the Ricci curvature tensor on $\s = \a \op \n$, a solvable metric Lie algebra.  Let $\ric^{\n}$ denote the Ricci curvature tensor on $\n$. For $X \in \n$, $A,A' \in \a$, \\
(i) $\ric(A,A') =-\tr(\ad A \circ \ad A')$, \\
(ii) $\ric(A,X) =0$, \\
(iii) $\ric(X) = \ric^{\n}(X) - \langle [H_0,X],X\rangle$, where $H_0$ is the mean curvature vector.
\end{lemma}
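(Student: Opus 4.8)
The plan is to specialize the general Ricci formula stated above, evaluating each of its four terms (the ``first'' term $-\tfrac12\sum_i\langle[X,X_i],[Y,X_i]\rangle$, the Killing term $-\tfrac12 B(X,Y)$, the ``cubic'' term $\tfrac14\sum_{i,j}\langle[X_i,X_j],X\rangle\langle[X_i,X_j],Y\rangle$, and the mean-curvature term $-\langle U(X,Y),H_0\rangle$) on an orthonormal basis adapted to the splitting $\s=\a\op\n$, and to exploit the algebraic consequences of the Iwasawa-type hypotheses. First I would record the structural facts that do all the work. Since $\a$ is abelian and each $\ad A$ is self-adjoint, the operators $\{\ad A : A\in\a\}$ are simultaneously diagonalizable on $\n$, giving a root-space decomposition $\n=\bigoplus_\lambda \n_\lambda$ into mutually orthogonal subspaces; condition (iv) forces every root $\lambda$ occurring to be nonzero (indeed $\lambda(A_0)>0$). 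From this one extracts three facts used repeatedly: (a) $\a\perp\n$, obtained by pairing $\langle A, X\rangle$ against the positive-definite $\ad A_0$ and using self-adjointness; (b) every bracket $[X_i,X_j]$ of basis vectors lies in $\n$, so its $\a$-component vanishes; and (c) the mean-curvature vector satisfies $\langle H_0, Z\rangle = \tr(\ad Z)$, which is zero for $Z\in\n$ since $\ad Z$ is nilpotent, whence $H_0\in\a$.

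For (i) and (ii) I would then read off the four terms directly. Taking $X=A$, $Y=A'$ in $\a$: fact (b) kills the cubic term; a short computation gives $U(A,A')=0$, killing the mean-curvature term; and the first term together with $-\tfrac12 B(A,A')$ each evaluate to $-\tfrac12\tr(\ad A\circ\ad A')$ (using that $\ad A$ annihilates $\a$, preserves $\n$, and is self-adjoint), summing to $-\tr(\ad A\circ\ad A')$. Taking $X=A\in\a$, $Y\in\n$: the cubic term vanishes by (b); the first term vanishes because for a root vector in $\n_\alpha$ the vector $[A,X_i]$ stays in $\n_\beta$ while $[Y,X_i]\subset\n_{\alpha+\beta}$ is orthogonal to it (as $\alpha\neq0$), and the general case follows by linearity; the Killing term vanishes because $\ad A\circ\ad Y$ strictly shifts the grading and so is trace-free; and since $U(A,Y)=-\tfrac12[A,Y]\in\n$ while $H_0\in\a$, the mean-curvature term vanishes as well.

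Part (iii) is the substantive one. I would take an orthonormal basis that is the union of a basis $\{A_p\}$ of $\a$ and a basis $\{Y_i\}$ of $\n$, and compare the resulting expression for $\ric(X,X)$, $X\in\n$, with the Ricci form of the nilpotent metric Lie algebra $\n$, for which the Killing-form and mean-curvature contributions both vanish ($\n$ is nilpotent, hence unimodular). The pure-$\n$ portions of the first and cubic terms reproduce $\ric^\n(X)$ exactly. The Killing term of $\s$ vanishes on $\n$ by the same grading-shift argument as in (ii). The mean-curvature term contributes $-\langle U(X,X),H_0\rangle = -\langle[H_0,X],X\rangle$, the desired correction. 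The heart of the matter is that the remaining mixed $\a$-$\n$ contributions cancel: the basis vectors $A_p$ add $-\tfrac12\sum_p|\ad A_p(X)|^2$ to the first term, while the ordered pairs $(A_p,Y_j)$ and $(Y_j,A_p)$ add $+\tfrac12\sum_p|\ad A_p(X)|^2$ to the cubic term, where one rewrites $\sum_j\langle\ad A_p(Y_j),X\rangle^2=\sum_j\langle Y_j,\ad A_p(X)\rangle^2=|\ad A_p(X)|^2$ using self-adjointness of $\ad A_p$ and completeness of $\{Y_j\}$ in $\n$. These two contributions are negatives of one another, leaving exactly $\ric^\n(X)-\langle[H_0,X],X\rangle$.

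The main obstacle is the careful bookkeeping in (iii): correctly isolating the mixed $\a$-$\n$ terms hidden in the first and cubic terms and verifying that the self-adjointness of $\ad A_p$ converts them into the same quantity $\tfrac12\sum_p|\ad A_p(X)|^2$ with opposite signs. Establishing $H_0\in\a$ cleanly, via $\langle H_0,Z\rangle=\tr(\ad Z)$ and nilpotency of $\n$, is the other point that must be in place before the mean-curvature term can be identified with $-\langle[H_0,X],X\rangle$; everything else reduces to the orthogonality of distinct root spaces and the fact that all brackets land in $\n$.
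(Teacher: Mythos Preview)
The paper does not give its own proof of this lemma; it simply quotes it from Wolter \cite[1.4]{W} and then uses it. So there is no ``paper's proof'' to compare against, and the relevant question is whether your argument stands on its own.

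It does. Your derivation is correct and is essentially the standard one: choose an orthonormal basis adapted to $\s=\a\oplus\n$, exploit that $\a$ is abelian, that each $\ad A$ is self-adjoint (so the $\n_\lambda$ are mutually orthogonal and $\a\perp\n$, the latter via $\langle A,[A_0,Y]\rangle=\langle[A_0,A],Y\rangle=0$), that all brackets land in $\n$, and that $\langle H_0,Z\rangle=\tr(\ad Z)$ vanishes on $\n$ by nilpotency, forcing $H_0\in\a$. With these in hand, parts (i) and (ii) drop out term-by-term exactly as you describe. For (iii), your key cancellation is right: the extra $\a$-contributions to the first term give $-\tfrac12\sum_p|\ad A_p(X)|^2$, while the mixed $(A_p,Y_j)$ pairs in the cubic term give $+\tfrac12\sum_p|\ad A_p(X)|^2$ after using self-adjointness of $\ad A_p$ and completeness of $\{Y_j\}$ in $\n$; the Killing term vanishes on $\n$ because $\ad X$ strictly raises the $\ad A_0$-grading; and the mean-curvature term becomes $-\langle[H_0,X],X\rangle$ since $H_0\in\a$. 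One small wording point: when you say ``$B(A,X)=0$ by grading-shift,'' it is cleaner to note simply that $\ad_\s X$ is nilpotent (block lower-triangular with nilpotent diagonal block), so $\ad A\circ\ad X$ has zero trace for any $A$; your root-space argument is a special case of this and requires summing over root components of $X$.
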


The  proof that Ricci curvature is unchanged relies on Wolter's lemma and these observations. (Recall, $\g^0$ is the sum of the root spaces $\g_{\alpha}$ for which $\alpha(Z)=0$.)
\begin{enumerate}
\item Since $[\a_{\Lambda'}, \g^0] =0$, we see   
${\displaystyle \tr(\ad A \circ \ad A')|_{\n_{\Lambda'}} = \tr(\ad A \circ \ad A')|_{\n_{\emptyset}}}$ for all $A,A' \in \a_{\Lambda'}$.  
Thus $\ric^{\s_{\Lambda'}}(A,A') = \tfrac 14 \langle A, A' \rangle = \ric^{\s_{\emptyset}}(A,A')$. 
\item For any $A \in  \a_{\Lambda'}$ and any $X \in \n_{\Lambda'}$, $\ric^{\s_{\Lambda'}}(A,X) = 0 = \ric^{\s_{\emptyset}}(A,X)$. 
\item Let $\{ E_i'\}$ be an orthonormal basis for $\n_{\Lambda'}$, and let $\{E_j^\perp\}$ be an orthonormal basis for 
$\n_{\emptyset} \ominus \n_{\Lambda'}$.    Define $H_0' = \tfrac 12 \sum [\sigma E_i',E_i']$ and $H^\perp = \tfrac 12 \sum [\sigma E_j^\perp,E_j^\perp]$, so that $H_0 = H_0' + H_0^\perp$.  
Hence $\ric^{\s_{\emptyset}}(X,Y) = \langle \ric^{\n_{\emptyset}} (X),Y\rangle - \langle [H_0,X],Y\rangle$,   
$\ric^{\s_{\Lambda'}} (X,Y) = \langle \ric^{\n_{\Lambda'}} (X),Y \rangle - \langle [H_0',X],Y\rangle.$  
\item We see ${\displaystyle \ric^{\n_{\emptyset}}(X) = -\tfrac 14 \sum [E_i,[\sigma E_i,X]_{\n_{\emptyset}}] + \tfrac 12\sum[\sigma E_i, [E_i,X]]_{\n_{\emptyset}}}$,   \\ and ${\displaystyle \ric^{\n_{\Lambda'}}(X) = -\tfrac 14 \sum [E_j',[\sigma E_j',X]_{\n_{\Lambda'}}] + \tfrac 12\sum[\sigma E_j', [E_j',X]]_{\n_{\Lambda'}}.}$
\end{enumerate}

\noindent Using these, Tamaru proves  ${\displaystyle \ric^{\n_{\emptyset}} (X) - \ric^{\n_{\Lambda'}} (X) = [H_0^\perp,X]}$ for all 
$X \in \n_{\Lambda'}$.  Thus,
\begin{align}\ric^{\s_{\emptyset}}(X,Y) - \ric^{\s_{\Lambda'}} (X,Y) &= \langle \ric^{\n_{\emptyset}}(X) - \ric^{\n_{\Lambda'}}(X), Y\rangle - \langle [H_0 - H_0',X],Y\rangle \notag \\
&= \langle [H_0^\perp,X],Y\rangle - \langle [H_0^\perp,X],Y\rangle = 0.\notag \end{align}

%%%%%%%%%%%%%%%%%%%%%%%%%%%%%%%%%%%%%%%%%%%%%%%%%%%%%%%
%%%%%%%%%%%%%%%%%%%%%%%%%%%%%%%%%%%%%%%%%%%%%%%%%%%%%%%

\subsection{Associated solvmanifolds} 
We now review the method in \cite[4.2]{GK} of modifying the algebraic structure of a solvmanifold while preserving the Ricci curvature. 
 We begin with our real semisimple Lie algebra $\g$. Let $\g^\R = (\g)^\C$, the complexification of $\g$, viewed as a real Lie algebra. 
View $\g$, and also $\s$, as  subalgebras of $\g^\R$.  
 Recall that the root
spaces $\g_{\alpha}$ are mutually orthogonal relative to the inner product defined by the symmetric metric on $S$.  
\begin{definition} There exists an {\em adapted} orthonormal basis $\calB$ 
of the subspace $\n$ of $\s$ such that 
\begin{itemize}
\item each basis vector lies in some $\g_{\beta}$,
\item the bracket of any two basis vectors is a scalar multiple of another basis vector,  
\item for any $X$, $Y$, and $U$  basis vectors with $Y\neq U$, then
$[X,Y]\perp [X,U].$ \end{itemize}
\end{definition}

\begin{definition}\label{assoc} Let $\calB$ be an adapted orthonormal basis $\calB$ for $\n$. 
Consider a new subspace of $\g^\R$ with basis  $\calB'$, where we obtain 
 $\calB'$ from  $\calB$ by replacing some of the vectors $X\in \calB$ by $\sqrt{-1}\,X$.  
 Our new subspace $\n'$ will be a nilpotent algebra provided it is closed under the
bracket operation in $\g$. The brackets of the basis vectors in $\n'$ differ only by a sign from their corresponding brackets in
$\n$.  Notice the subalgebra $\a$ of $\g^\R$ normalizes $\n'$.  We choose the inner product on  $\n'$ which makes $\calB'$ orthonormal, and choose the inner product on $\s' := \a \oplus \n'$ as an orthogonal sum of inner product spaces.   Then
$\s'$ is a solvable metric Lie algebra.  
%We will say $\s$ and $\s '$ are {\it associated} subalgebras of $\g^\R$.  
%This inner product on $\s '$ defines a left-invariant Riemannian metric on the associated simply connected Lie group $S'$. 
We say the corresponding Riemannian solvmanifold $S'$ is  {\em associated} to the symmetric space $G/K$.
\end{definition}
\begin{remark} The simply connected solvmanifold $S'$ corresponding to $\s'$ is an Einstein manifold:  the sign changes are cancelled out in the Ricci curvature equation \cite[4.3]{GK}.   It has since been proved that whenever $\n$ and $\n'$ are two real nilpotent Lie algebras whose complexifications are isomorphic, if $\n$ is an Einstein nilradical then  $\n'$ is as well 
(\cite[6]{N},\cite[6.5]{J3}).
\end{remark}

We note that one can first construct an associated solvmanifold $S'$ (associated to $S$), then consider the solvmanifolds 
${S'}_{\Lambda'}$  attached to $S'$. Alternatively, one can start from solvmanifold $S$, construct the  solvmanifolds $S_{\Lambda'}$ attached to $S$, and from there, create their associated solvmanifolds ${S_{\Lambda'}}'$.  These two techniques can be combined in either order to obtain the same new Einstein solvmanifolds.
%%%%%%%%%%%%%%%%%%%%%%%%%%%%%%%%%%%%%%%%%%%%%%%%%%%%%%%%%%%%%
\subsection{Isometries of solvmanifolds} 
We come to the issue of determining whether we have indeed created new solvmanifolds.  We verify that  these examples are distinct from Tamaru's.  To do this, we note that every attached solvmanifold $S_{\Lambda'}$ will inherit isometries from 
$S_{\emptyset}$.

\begin{proposition} Consider $\isom(S_{\Lambda'})$, the group of isometries of our solvmanifold. Let $M_{\Lambda'}$ denote the compact component of  $\isom(S_{\Lambda'})$.  Then $M_{\emptyset} \subseteq M_{\Lambda'}$.
\end{proposition}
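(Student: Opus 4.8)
The plan is to realize each element of $M_{\emptyset}$ as an isometry of the ambient space $S_{\emptyset}$ that fixes the base point, \emph{preserves the submanifold} $S_{\Lambda'}$, and hence restricts to it. The inherited group is the compact subgroup that normalizes $S_{\emptyset}$, namely $Z_{K}(\a)$, with Lie algebra $\k_0 = Z_{\k}(\a)$ (the compact part of the minimal parabolic); for $m \in Z_{K}(\a) \subset K$, the isometry $gK \mapsto mgK$ of $S_{\emptyset} \cong G/K$ restricts on $S_{\emptyset}$ to the conjugation $s \mapsto m s m^{-1}$, since $ms K = (msm^{-1})mK = (msm^{-1})K$ and $m$ normalizes $S_{\emptyset}$; its differential at the base point is the orthogonal automorphism $\Ad(m)|_{\s_{\emptyset}}$. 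So I would first record that $M_{\emptyset}$ sits inside the isotropy of $\isom(S_{\emptyset})$ as a group of orthogonal automorphisms of the metric Lie algebra $\s_{\emptyset}$, acting by $\Ad$.

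The key algebraic step is to show $\Ad(m)$ preserves $\s_{\Lambda'}$. Because $m$ centralizes $\a$, the map $\Ad(m)$ fixes $\a$ pointwise and commutes with $\ad A$ for every $A \in \a$; hence it preserves each eigenspace, i.e.\ each root space $\g_{\alpha}$, and therefore each graded piece $\g^{j} = \sum_{\alpha(Z)=j}\g_{\alpha}$. Consequently $\Ad(m)$ fixes $\a_{\Lambda'} = \text{span}\{H^{i_1},\dots,H^{i_k}\}$ pointwise and preserves $\n_{\Lambda'} = \sum_{k>0}\g^{k}$, so it preserves $\s_{\Lambda'} = \a_{\Lambda'} + \n_{\Lambda'}$; being a Lie algebra automorphism of $\g$, its restriction is a Lie algebra automorphism of the subalgebra $\s_{\Lambda'}$.

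It remains to check orthogonality and to pass back to isometries. Since $m \in K$ and $\sigma$ commutes with $\Ad(k)$ for $k \in K$, the form $B_{\sigma}$ is $\Ad_{K}$-invariant, so $\Ad(m)$ preserves $B_{\sigma}$; as it also preserves the orthogonal splitting $\s_{\Lambda'} = \a_{\Lambda'} \oplus \n_{\Lambda'}$, it preserves the inner product $\langle\,,\,\rangle = 2B_{\sigma}|_{\a_{\Lambda'} \times \a_{\Lambda'}} + B_{\sigma}|_{\n_{\Lambda'} \times \n_{\Lambda'}}$. Thus $\Ad(m)|_{\s_{\Lambda'}}$ is an orthogonal automorphism of the metric Lie algebra $\s_{\Lambda'}$, and by the standard-position correspondence of \cite{GW} it integrates to an isometry of $S_{\Lambda'}$ fixing the base point; equivalently, the ambient conjugation $s \mapsto msm^{-1}$ carries $S_{\Lambda'}$ to itself and restricts there to an isometry, the induced submanifold metric being exactly the intrinsic solvmanifold metric (both are the restriction of $2B_{\sigma}|_{\a} + B_{\sigma}|_{\n}$, using $\a_{\Lambda'}\subset\a$ and $\n_{\Lambda'}\subset\n$). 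This yields a homomorphism $M_{\emptyset} \to M_{\Lambda'}$, $m \mapsto \Ad(m)|_{\s_{\Lambda'}}$, giving the asserted inclusion.

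The step I expect to require the most care is the last one, namely confirming that this homomorphism is injective, so that $M_{\emptyset}$ genuinely embeds rather than merely maps into $M_{\Lambda'}$. The potential difficulty is an element of $M_{\emptyset}$ acting trivially on $\n_{\Lambda'}$ while acting nontrivially on $\n_{\emptyset}\ominus\n_{\Lambda'}$; I would rule this out using that $\n_{\Lambda'}$ already contains the simple root spaces $\g_{\alpha_{i_j}}$ removed by $Z$, on which $M_{\emptyset}$ acts faithfully. A secondary, routine point is the metric identification noted above. Note finally that we only need the inclusion, so the fact that the non-symmetric $S_{\Lambda'}$ may a priori carry additional isometries does not affect the argument.
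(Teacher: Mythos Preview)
Your argument is essentially the paper's, carried out in more detail: identify $M_{\emptyset}$ with the subgroup of $K$ centralizing $\a$ (the paper writes ``normalizer,'' but its examples make clear it means $Z_K(\a)$), observe that $\Ad(m)$ commutes with each $\ad A$ and hence preserves every root space and therefore $\s_{\Lambda'}$, check that the inner product is preserved, and invoke \cite{GW} to pass back to isometries of $S_{\Lambda'}$. The paper compresses this to two sentences; you spell out the mechanism, which is fine.

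One correction to your final remark on injectivity (which the paper does not address at all): the claim that $M_{\emptyset}$ acts faithfully on the simple root spaces $\g_{\alpha_{i_j}}$ is false in general. In the $\SO(p,q)$ case, $M_{\emptyset}=\SO(q-p)$ acts trivially on every $\g_{\omega_{j+1}-\omega_j}$, so if $\Lambda'=\{\omega_1\}$ the removed simple roots are exactly those on which $M_{\emptyset}$ acts trivially. Faithfulness on $\n_{\Lambda'}$ still holds, but it comes from the \emph{non-simple} root spaces $\g_{\omega_k}$ for $k\ge 2$, which survive in $\n_{\Lambda'}$; the correct general argument would use that $\n_{\Lambda'}$ generates $\n$ under bracketing with $\g^0\cap\n$, or simply that an element of $K$ acting trivially on all of $\n_{\Lambda'}$ centralizes enough of $\g$ to be central.
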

\begin{proof} We know from the work of Gordon and Wilson that the compact component $M$ of the isometry group of a solvmanifold $S$ is the normalizer of $A$ in $K$, the maximal compact subgroup  of $G=\isom(S)$ \cite{GW}. That is,  $M_{\emptyset}$ is the normalizer of $A$ in $K$ (here $S_{\emptyset}=G/K$, and $G=\isom(S_{\emptyset}) =KAN$). The isometries of 
a solvmanifold $S$ must preserve root spaces; hence all isometries of $S_{\emptyset}$ take $S_{\Lambda'}$ to itself. Our conclusion,  $M_{\emptyset} \subseteq M_{\Lambda'}$, follows directly \cite[La.~6.4]{GW}.  \end{proof}
  
Now, we contrast the attached solvmanifold case with the case of associated solvmanifolds. 
  
\begin{proposition}\cite[4.4]{GK}\label{isomorphic} If $S'$ and $S''$ are two Riemannian solvmanifolds associated
with $G/K$, then $S'$ is isometric to $S''$ if and only if there exists an automorphism
$\phi$ of the root system (i.e., a Weyl group element) and an isomorphism
$\tau: \n' \to \n''$ such that
$\tau(\n'_\alpha)=\n''_{\phi(\alpha)}$ for all roots $\alpha$.
 \end{proposition}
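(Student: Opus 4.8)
The plan is to reduce the statement about isometry of solvmanifolds to an algebraic statement about metric Lie algebras, using Theorem \cite[5.2]{GW}. Since every associated solvmanifold is again of Iwasawa type with real roots --- the subalgebra $\a$ is unchanged, and for $A\in\a$ the operator $\ad A$ acts on each modified root space $\n'_\alpha$ by the same real scalar $\alpha(A)$, because passing from $X$ to $\sqrt{-1}\,X$ commutes with the real operator $\ad A$ --- the data $\s'$ and $\s''$ are in standard position. Hence $S'$ is isometric to $S''$ if and only if there is a Lie algebra isomorphism $\varphi\colon\s'\to\s''$ that is simultaneously an isometry of inner product spaces. Throughout I would exploit that, for generic $A\in\a$, the $\ad A$-eigenspace decomposition of $\n'$ is exactly $\bigoplus_\alpha \n'_\alpha$, with $\n'_\alpha$ the $\alpha(A)$-eigenspace; this is what ties a metric isomorphism to the root system.

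For the sufficiency direction I would start from the given automorphism $\phi$ of the root system, realized as an orthogonal transformation $w\colon\a\to\a$ of the inner product space $\a$ with $\phi(\alpha)=\alpha\circ w^{-1}$, together with the root-space-preserving isomorphism $\tau\colon\n'\to\n''$, and define $\varphi:=w\oplus\tau$ on $\s'=\a\oplus\n'$. That $\varphi$ is a homomorphism on $\n'\times\n'$ is exactly the hypothesis that $\tau$ is an isomorphism; on the mixed brackets it reduces to the identity $(\phi\alpha)(wA)=\alpha(A)$, which holds since $\phi(\alpha)=\alpha\circ w^{-1}$. It then remains to check that $\varphi$ is an isometry: $w$ is orthogonal on $\a$ by construction, and on $\n$ I would use that, after normalizing the adapted basis, the inner product on each root space is the canonical one induced by $B_\sigma$, so that corresponding root spaces $\n'_\alpha$ and $\n''_{\phi(\alpha)}$ carry metrically identical inner products. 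The remaining task is to arrange that $\tau$ restricts to an orthogonal map on each root space while still respecting the bracket; granting this, Theorem \cite[5.2]{GW} upgrades the metric isomorphism $\varphi$ to an isometry $S'\cong S''$.

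For the necessity direction I would begin with an isometric Lie algebra isomorphism $\varphi\colon\s'\to\s''$ supplied by Theorem \cite[5.2]{GW}. The nilradical is characteristic, so $\varphi(\n')=\n''$; since the inner product is the orthogonal sum $\a\perp\n'$, the isometry forces $\varphi(\a)=(\n')^{\perp}\mapsto(\n'')^{\perp}=\a$. Thus $\varphi=w\oplus\tau$ with $w:=\varphi|_\a$ orthogonal and $\tau:=\varphi|_{\n'}$ a Lie algebra isomorphism. Applying $\varphi$ to $[A,X]=\alpha(A)X$ for $X\in\n'_\alpha$ gives $[\,wA,\tau X\,]=\alpha(A)\,\tau X$, so $\tau X$ is an $\ad(wA)$-eigenvector of eigenvalue $\alpha(A)$ for every $A$. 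Decomposing $\tau X=\sum_\beta v_\beta$ into $\n''$-root components forces $\beta(wA)=\alpha(A)$ for all $A$ whenever $v_\beta\neq 0$, so $\beta=\alpha\circ w^{-1}$ is uniquely determined; hence $\tau(\n'_\alpha)\subseteq\n''_{\alpha\circ w^{-1}}$, and equality follows by comparing dimensions. Setting $\phi(\alpha):=\alpha\circ w^{-1}$ exhibits $\phi$ as an orthogonal bijection of $\a^{*}$ carrying $\Delta$ to $\Delta$, i.e.\ an automorphism of the root system, which extracts the pair $(\phi,\tau)$.

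The main obstacle I anticipate is the isometry half of the sufficiency argument: the hypothesis only furnishes an abstract Lie algebra isomorphism $\tau$ matching root spaces, whereas I must produce a genuine isometry. The content is that the inner product of an associated solvmanifold is rigidly determined on each root space by the $B_\sigma$-normalization, so a root-space-matching isomorphism should be metric up to an orthogonal automorphism within each root space; the delicate point is verifying that these orthogonal adjustments can be chosen compatibly with the bracket, so that the corrected $\tau$ remains a Lie isomorphism. This is where I expect the special structure of the adapted basis to do the work --- namely that the bracket of two basis vectors is a multiple of a single basis vector and that $[X,Y]\perp[X,U]$ for $Y\neq U$ --- since these constraints should pin down the admissible orthogonal corrections.
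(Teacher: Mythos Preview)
The paper does not supply its own proof of this proposition: it is simply quoted from \cite[4.4]{GK} and used as a black box, so there is nothing in the present paper to compare your argument against line by line.

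On its merits, your necessity direction is sound. Once Theorem \cite[5.2]{GW} hands you an isometric Lie algebra isomorphism $\varphi$, the nilradical is characteristic, orthogonality forces $\varphi(\a)=\a$, and the eigenvector computation you give pins down $\tau(\n'_\alpha)\subseteq \n''_{\alpha\circ w^{-1}}$ and hence the root-system automorphism $\phi$. One small caveat: you conclude that $\phi$ is an automorphism of the root system, which is correct, but the parenthetical in the statement identifies this with a Weyl group element. In general a root-system automorphism can also involve a diagram automorphism, so either this identification is a mild imprecision in the statement or one must invoke the specific restricted root systems at hand; you should flag which.

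Your sufficiency direction, however, has a genuine gap, and you correctly identify it yourself. The hypothesis only provides a Lie algebra isomorphism $\tau$ matching root spaces; to invoke \cite[5.2]{GW} you need a \emph{metric} Lie algebra isomorphism. Your proposed fix --- adjust $\tau$ on each root space by an orthogonal map and hope the adjustments can be made bracket-compatible --- is exactly the nontrivial step, and the adapted-basis properties you list do not by themselves guarantee it. The brackets couple different root spaces, so independent orthogonal corrections on each $\n'_\alpha$ will generally destroy the homomorphism property; one needs a structural reason why a simultaneous metric choice exists. In \cite{GK} this is handled using the specific form of the associated construction (the new structure constants differ from the symmetric ones only by signs, and the inner product is the one making the adapted basis orthonormal), so that the existence of a root-space-matching Lie isomorphism already forces a sign-pattern equivalence that can be realized metrically. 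As written, your sketch stops short of supplying that argument, so the sufficiency half is incomplete.
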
 

%\begin{corollary} Let $S_1 := (S')_{\Lambda_1'}$ be attached to solvmanifold $S'$, associated to $S$, and 
%let $S_2 := (S'')_{\Lambda_2'}$ be attached to  solvmanifold $S''$, associated to $S$. Assume each $\Lambda_i'$ is nontrivial. 
%Then $S_1$  is isomorphic (and isometric) to $S_2$  only if  there exists an automorphism $\phi$ of root systems 
%taking $\Lambda_1'$ to $\Lambda_2'$. 
%\end{corollary}
%\begin{proof} We know $S_1$ and $S_2$ will be isometric if and only if they are isomorphic, if and only if their corresponding %Lie algebras are isomorphic. If $\s_1 \cong \s_2$ then $\s_1^{\C} \cong\s_2^{\C}$. Thus, we need $\Lambda_1'$ and 
%$\Lambda_2'$ to be either equal or equivalent via a root system automorphism. 
%\end{proof}

\begin{proposition} Let $S'$ be a solvmanifold associated to the symmetric space $G/K$.  Consider $\isom(S')$, the group of isometries of our solvmanifold $S'$. Let $M$ denote the compact component of  $\isom(G/K)$ and $M'$ denote the compact component of  $\isom(S')$.  Then $M' \subset M$.
\end{proposition}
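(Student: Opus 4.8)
The plan is to realize $M$ and $M'$ as groups of orthogonal automorphisms of the metric Lie algebras $\s_{\emptyset}$ and $\s'$, and then to transfer an automorphism of $\s'$ to one of $\s_{\emptyset}$ by undoing the $\sqrt{-1}$ scalings that define the associated algebra. By \cite{GW}, for a solvmanifold in standard position the compact component of the isometry group is the isotropy at the identity, acting on the metric solvable Lie algebra by orthogonal automorphisms; hence $M \cong \Aut(\s_{\emptyset}) \cap \O(\s_{\emptyset}) = N_K(A)$ and $M' \cong \Aut(\s') \cap \O(\s')$. Since $\s' = \a \oplus \n'$ is an orthogonal sum with nilradical $\n'$, each $\psi \in M'$ preserves both $\a$ and $\n'$. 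Moreover the $\a$-weights are unchanged by the construction: if $X \in \g_{\alpha}$ then $\sqrt{-1}\,X$ again has weight $\alpha$, so $\n'_{\alpha} = \Phi(\n_{\alpha})$, where $\Phi \colon \n \to \n'$ sends each adapted basis vector $X_i \in \calB$ to the corresponding vector of $\calB'$ (either $X_i$ or $\sqrt{-1}\,X_i$). Thus $S'$ and $G/K$ carry the same root system, and by \cite[La.~6.4]{GW} every $\psi \in M'$ permutes the root spaces, inducing an automorphism $\phi$ of that system, which by Proposition \ref{isomorphic} is a Weyl group element.

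First I would define the transfer. For $\psi \in M'$ put $\hat\psi := \psi|_{\a} \oplus (\Phi^{-1} \circ \psi|_{\n'} \circ \Phi)$, a linear map of $\s_{\emptyset} = \a \oplus \n$. Because $\Phi$ carries the orthonormal basis $\calB$ to the orthonormal basis $\calB'$ and is diagonal in these bases (entries $1$ or $\sqrt{-1}$), the map $\hat\psi$ has exactly the same matrix $A$ in $\calB$ as $\psi$ has in $\calB'$. Consequently $\hat\psi$ is orthogonal, preserves each $\n_{\alpha}$, induces the same Weyl element $\phi$, and $\psi \mapsto \hat\psi$ is an injective homomorphism. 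It therefore suffices to prove that $\hat\psi$ is a Lie algebra automorphism of $\s_{\emptyset}$: then $\hat\psi \in \Aut(\s_{\emptyset}) \cap \O(\s_{\emptyset}) = M$, and $M' \subseteq M$ follows.

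Verifying this automorphism property is the crux, and it is exactly where the sign changes must be tracked. On the adapted basis the two brackets differ only by signs: $[X_i, X_j]_{\n} = c_{ij} X_k$ forces $[X_i', X_j']_{\n'} = \epsilon_{ij}\, c_{ij}\, X_k'$ with $\epsilon_{ij} = \eta_i \eta_j / \eta_k \in \{\pm 1\}$, where $\eta_m \in \{1, \sqrt{-1}\}$ records whether $X_m$ was scaled. Substituting $X_m' = \eta_m X_m$ into the automorphism identity satisfied by $\psi$ on $\n'$ turns it into the automorphism identity for $\hat\psi$ on $\n$, but decorated by the factors $\eta_{\bullet}$; since $\epsilon_{ij}\eta_k = \eta_i\eta_j$, these factors cancel precisely when the matrix $A$ is supported compatibly with the pattern of scaled basis vectors, i.e. when the root-space permutation induced by $\psi$ carries scaled vectors to scaled vectors in a way that matches the bracket relations.

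The main obstacle I expect is establishing exactly this compatibility, since a general orthogonal, root-space-permuting map need not respect brackets once the signs are introduced; the content of the proposition is that the particular twists $\epsilon_{ij}$ arising from the associated construction are compatible with every isometry of $S'$. To close this I would use that $\psi$ preserves each $\n'_{\alpha} = \Phi(\n_{\alpha})$ and acts on $\a$ by the Weyl element $\phi$ of the common root system, together with the organization of the adapted basis by the bracket relations within and between root spaces, to force $A$ to intertwine the $\eta$-pattern. Once $\hat\psi$ is known to be an automorphism, injectivity of $\psi \mapsto \hat\psi$ and the inclusion $\hat\psi \in M$ give $M' \subseteq M$.
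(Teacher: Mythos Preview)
Your framework—identifying $M$ and $M'$ with orthogonal automorphism groups and transferring $\psi\in M'$ to $\hat\psi$ on $\s_{\emptyset}$ by conjugating with the basis map $\Phi$—is a faithful formalization of what the paper's two-sentence proof is gesturing at. The paper's argument is really just: isometries preserve root spaces, and multiplying selected basis vectors by $\sqrt{-1}$ can only \emph{remove} symmetries; it does not supply the algebraic verification you are attempting.

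The obstacle you flag in your last two paragraphs is genuine and you have not closed it. Knowing that $\hat\psi$ is orthogonal, preserves $\a$, and permutes root spaces via a Weyl element does \emph{not} force it to respect the bracket of $\s_{\emptyset}$. Concretely, from $\psi[X_i',X_j']'=[\psi X_i',\psi X_j']'$ you get
\[
\epsilon_{ij}\,c_{ij}\,A_{mk}=\sum_{p(l,n)=m} A_{li}A_{nj}\,\epsilon_{ln}\,c_{ln},
\]
and for $\hat\psi$ to be an automorphism of $\s_{\emptyset}$ you need the same identity with all $\epsilon$'s deleted. These coincide only if $\epsilon_{ln}=\epsilon_{ij}$ whenever $A_{li}A_{nj}c_{ln}\neq 0$ and $p(l,n)=m$. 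Nothing in the general setup of Definition~\ref{assoc} guarantees that the matrix entries of $\psi$ are supported in a way compatible with the $\eta$-pattern; the appeal to ``the organization of the adapted basis by the bracket relations'' is not an argument. In particular, an orthogonal automorphism of $\s'$ that mixes a twisted basis vector with an untwisted one inside a single root space would generically violate this, and you have given no reason why such mixing is forbidden.

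The paper does not close this gap at the level of the general proposition either. Immediately after the proof it announces ``In what follows, in each case we will verify that for our associated solvmanifolds $S'$, $M'\subsetneq M$,'' and then in the Remarks of \S\ref{SO(p,q)}, \S\ref{unitary}, \S\ref{symplectic} it simply \emph{computes} $M$ and $M'$ explicitly (e.g.\ $M=\SO(q-p)$, $M'=\SO(a)\SO(b)$) and observes the inclusion. So the honest content of the proposition is carried by those case-by-case computations, not by the short paragraph labeled ``Proof.'' Your attempt to prove it uniformly is more ambitious than what the paper actually does, and the place where it stalls is exactly where a uniform argument would require additional input about the specific associated constructions being used.
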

\begin{proof} Recall, $M \subset \isom(G/K)$ is the normalizer of $A$ in $K$. Since the isometries in 
$M'$ must preserve root spaces,  we see that  when a selection of  basis  vectors $X$ are replaced by $\sqrt{-1}\,X$ to construct $\calB'$ (and hence $S'$), some isometries may be lost.  \end{proof}

In what follows, in each case we will verify that  for our associated solvmanifolds $S'$, $M' \subsetneq M$. That is, the compact component $M'$ of  $\isom(S')$ is a proper subset of $M$.

%%%%%%%%%%%%%%%%%%%%%%%%%%%%%%%%%%%%%%%%%%%%%%%%%%%%%%%%%%%%%%%%%%%%%%%%%%%%%%%%
\section{New Examples} 
\subsection{\boldmath $SO(p,q)/SO(p)SO(q)$, \boldmath $SU(p,q)/S(U(p)U(q))$, and \boldmath $Sp(p,q)/Sp(p)Sp(q)$}\label{SO(p,q)} 
We start with the  noncompact symmetric spaces of the form $\SO(p,q)/\S(\O(p)\O(q))$, $\SU(p,q)/\S(\U(p)\U(q))$ or 
$\Sp(p,q)/\Sp(p)\Sp(q)$. Without loss of generality, we may assume that $q\geq p$, so that the rank of these symmetric spaces is $p$.  Then
$$\g = \{ A \in \gl(p+q, \bF) \mid AM = -MA^t \} \quad \text{where} \quad M = \begin{pmatrix} \Id_p & 0 \\ 0 & -\Id_q \end{pmatrix},$$
where $\bF = \R$ in the case of $\so(p,q)$, $\bF=\C$ in the case of $\su(p,q)$, and $\bF=\bH$ in the case of $\sp(p,q)$.
Elements of $\g$  may be written in matrix block form this way,
${\displaystyle \begin{pmatrix} A & B & C \\ B^t & D & E \\ C^t & -E^t & F \end{pmatrix}, }$
such that blocks $A,B,D$ are $p \times p$,  blocks $C, E$ are $p \times (q-p)$, and block $F$ is $(q-p)\times(q-p)$, and furthermore,  $A=-A^t$, $D=-D^t$, $F=-F^t$, while $B, C, E$ are arbitrary.  

We consider the maximal torus $\a$ consisting of the matrices in $\g$ for which $B$ is diagonal, and all other blocks are zero.  
Let $\{ \omega_1, \dots,\omega_p\}$ denote the basis of duals to the standard basis for the set of diagonal elements of $B$. Then we may choose the set of positive roots
\begin{align}\Delta^+ &= \{ \omega_i \mid 1 \le i \le p\} \cup \{\omega_j \pm \omega_i \mid 1 \le i < j \le p\} \text{ (orthogonal)} \notag \\  
\Delta^+ &= \{ \omega_i \mid 1 \le i \le p\} \cup \{\omega_j \pm \omega_i \mid 1 \le i < j \le p\} \cup \{2\omega_i \mid 1 \le i \le p\} \text{ (unitary, symplectic).}\notag \end{align}   
We may take 
${\displaystyle \Lambda = \{ \omega_1, \omega_2 - \omega_1, \omega_3 - \omega_2 \dots, \omega_p - \omega_{p-1}\}}$ as the fundamental system.  We give bases of  root vectors below. In what follows, $E_{ij}$ is the skew-symmetric matrix with 1 in the 
$(i,j)$ entry, $-1$ in the $(j,i)$ entry and zeroes elsewhere; $F_{ij}$ is the symmetric matrix with 1 in the $(i,j)$ and $(j,i)$ entries and zeroes elsewhere.  (While the basis elements for $\Lambda$ above are not orthogonal, the basis vectors below are orthonormal.) 
\begin{align} \n_{\omega_k} = \text{span}\{ U_{km} &= (F_{km} + E_{p+k,m}), \, \, \epsilon \, V_{k,m} =\epsilon(E_{km} + F_{p+k,m}) \mid 2p+1 \leq m \leq p+q\} \notag \\
\n_{\omega_j - \omega_i} =  \text{span}\{ Y_{jk}^-  &= \tfrac{1}{\sqrt 2} (E_{jk} +  E_{p+j,p+k} -F_{j,p+k} - F_{k,p+j}), \notag \\
\epsilon \, Z_{jk}^- &= \tfrac{\epsilon}{\sqrt 2} ( F_{jk} + F_{p+j,p+k} - E_{j,p+k}  + E_{k,p+j}) \} \notag \\
\n_{\omega_j + \omega_i} =  \text{span}\{ Y_{jk}^+  &= \tfrac{1}{\sqrt 2} (E_{jk} -  E_{p+j,p+k} -F_{j,p+k} + F_{k,p+j}), \notag \\
\epsilon \, Z_{jk}^+ &= \tfrac{\epsilon}{\sqrt 2} ( F_{jk} - F_{p+j,p+k} - E_{j,p+k}  - E_{k,p+j}) \} \notag \\
\n_{2\omega_k} = \text{span}\{\epsilon \, W_k &= \epsilon (F_{kk} - F_{p+k,p+k} - E_{k,p+k}) \}
\notag \end{align}
The basis vectors of type $V_{ij}$, $Z_{ij}^{\pm}$ and $W_j$ occur only in the unitary and symplectic cases, and $\epsilon^2 = -1$. In the unitary case, $\epsilon = i$. In the symplectic case, $\epsilon \in \{i,j,k\}$.

Notice that for each $k$, the root space $\n_{\omega_k}$  lies in  
${\displaystyle W = \left\{\begin{pmatrix} 0 & 0 & C \\ 0 & 0 & C \\ C^t & -C^t & 0 \end{pmatrix} \right\} }$. In particular, 
$\n_{\omega_k}$ is the space for which all but the $k^{th}$ row of $C$ is zero.  Meanwhile, the root spaces 
$\n_{\omega_j - \omega_i}$, 
$\n_{\omega_j + \omega_i}$, and $\n_{2\omega_i}$ lie in 
${\displaystyle \begin{pmatrix} * & * & 0 \\ * & * & 0 \\ 0 & 0 & 0 \end{pmatrix} }$.  These observations will be useful in what follows. 

In the case that our space has $q-p \geq 2$, we obtain an {\em associated} solvmanifold as follows. 
For any integer $a$ with $1 \le a \le q-p$ and for $b$ such that
$a+b=q-p$, we obtain a decomposition $W = W_a \oplus W_b$. In $W_a$, the last $b$ columns of $C$ are all zero. In $W_b$, the first $a$ columns of $C$ are all zero. 

We construct $\s_a$ by replacing each $X$ in $W_b$ by $X' = \sqrt{-1}\,X$ in $\s^{\C}$, leaving $W_a$ unchanged. 
With the following observations it is clear that $\s_a$ is closed under the bracket in $\s^{\C}$: \\
(i) $W$ commutes with all root spaces of type $\n_{2\omega_i}$, $\n_{\omega_j + \omega_i}$ \\
(ii) $W_b$ commutes with $W_a$ \\
(iii) The maximal torus $\a$ and the root spaces $\n_{\omega_j - \omega_i}$ normalize each of $W_a$ and $W_b$. 

The associated simply connected solvmanifold $S_a$ corresponding to $\s_a$ is Einstein.   
Yet  $S_a$, unlike the symmetric manifold, admits some positive sectional curvature. Thus $S_a$ cannot be isometric to its corresponding symmetric associate. 

To see a two-plane in $S_a$ admitting positive curvature, let $i$ and $j$ be integers with $1 \le i \le a < j \le a+b$. For 
$1 \leq k < l \leq p$, let $X_k$ be the element of $\n_{\omega_k}$  for which $C$ has $(k,i)$ and $(k,j)$ 
entries equal to $\tfrac{1}{\sqrt 2}$ and all other entries zero, and let $X_l$ in $\n_{\omega_l}$ be defined analogously. 
Let $X'_k$ and $X'_l$ be the corresponding unit vectors in $\s_a$. Then we have $[X'_k, X'_l]=0$ and 
$\langle U(X'_k,X'_k), U(X'_l,X'_l)\rangle =0$. On the other hand,  
$U(X'_k,X'_l)$ is a nonzero element of $\n_{\omega_l -\omega_k}$. 
Thus %by equation (\ref{sec}), 
$K(X'_k,X'_l) > 0$ \cite{GK}.

\begin{remark} Two solvmanifolds $S_a$ and $S_{a'}$ both associated to $S$ will be isometric to each other if and only if 
either $a=a'$ or $a=(q-p)-a'$, by Prop. \ref{isomorphic}. 
\end{remark}

In the construction of our solvable Lie group $\s_a$ out of $\s$, the maximal torus $\a$, the root system $\Delta$ and the fundamental system $\Lambda$  are unchanged.  Thus we have a dual basis for $\a$, $\{H^1, H^2, \dots, H^p\}$, such that $H^i$ is dual to the $i^{th}$ element  of $\Lambda$. 
Each choice of characteristic element $Z = c_{i_1} H^{i_1} + \dots + c_{i_k} H^{i_k}$ 
(with $c_{i_j} \in \Z_+$), determines $\Lambda' \subset \Lambda$ and, in turn,  $\Lambda'$ determines a solvable 
metric Lie subalgebra $(\s_a)_{\Lambda'} = \a_{\Lambda'} + (\n_a)_{\Lambda'} \subset \s_a$.  This proves the statement below.

\begin{proposition} Let $S_a$ be a solvmanifold {\sl associated to} a solvmanifold corresponding to a noncompact symmetric space of type $\SO(p,q)/\S(\O(p)\O(q))$, $\SU(p,q)/\S(\U(p)\U(q))$ or $\Sp(p,q)/\Sp(p)\Sp(q)$, in the sense of \cite{GK}. Let 
$(S_a)_{\Lambda'}$ be an attached solvmanifold to $S_a$.% in the sense of \cite{T2}. 
Then $((S_a)_{\Lambda'}, g')$ is an Einstein solvmanifold. 
\end{proposition}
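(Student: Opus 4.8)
The plan is to combine the two Einstein-preserving operations in the order \emph{attach first, then associate}, so that each step invokes only a result already in hand. Applying Tamaru's attached construction directly to the symmetric solvmanifold $\s_{\emptyset}$ yields, by \cite[5.3]{T2}, an Einstein solvmanifold $\s_{\Lambda'}$; applying the associated construction of \cite{GK} to $\s_{\Lambda'}$ then preserves the Einstein condition by \cite[4.3]{GK}. The whole proof therefore reduces to an identity of metric Lie algebras $(\s_a)_{\Lambda'} = (\s_{\Lambda'})_a$, where the right-hand side denotes the associated modification applied to Tamaru's attached algebra. Note that applying Tamaru's Ricci computation \emph{directly} to $\s_a$ is not available, since $\s_a$ no longer arises as the solvable part of an Iwasawa decomposition of a semisimple Lie algebra and its brackets carry sign changes; this is precisely why the order of operations matters.

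First I would observe that the associated construction $\s \mapsto \s_a$ leaves $\a$, the root system $\Delta$, and the fundamental system $\Lambda$ unchanged, as recorded above. Consequently a characteristic element $Z \in \a$ and the subset $\Lambda'$ it determines are identical for $\s_a$ and for $\s$. The essential point is that the grading is unaffected by the modification: each replaced vector $X \mapsto \sqrt{-1}\,X$ still lies in the same complexified root space $\g_{\alpha} \subset \g^{\R}$, so $\ad Z(\sqrt{-1}\,X) = \alpha(Z)\,\sqrt{-1}\,X$, and the $\ad Z$-eigenspace decomposition $\g^{\R} = \bigoplus_j (\g_a)^j$ agrees root-space-by-root-space with that of $\g$. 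Hence $(\n_a)_{\Lambda'} = \sum_{k>0}(\g_a)^k$ is spanned by exactly the positive-grading root vectors selected in the unmodified case, with those lying in $W_b$ scaled by $\sqrt{-1}$.

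This establishes the commutativity $(\s_a)_{\Lambda'} = (\s_{\Lambda'})_a$ once I check that the right-hand side is a \emph{bona fide} associated modification in the sense of Definition \ref{assoc}. Here the nilradical $\n_{\Lambda'}$ inherits an adapted orthonormal basis from that of $\n_{\emptyset}$: it is spanned by the sub-collection of adapted basis vectors $X \in \g_{\alpha}$ with $\alpha(Z)>0$, and since $\n_{\Lambda'}$ is a subalgebra, the three conditions of the adapted-basis definition all restrict to this sub-collection. Moreover $\a_{\Lambda'}$ normalizes $(\n_a)_{\Lambda'}$ just as $\a$ normalizes $\n_a$, and the intersection $W_b \cap \n_{\Lambda'}$ is exactly the set of basis vectors replaced by $\sqrt{-1}$ times themselves. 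With this identification, $\s_{\Lambda'}$ is Einstein by \cite[5.3]{T2} and its associated modification $(\s_{\Lambda'})_a$ is Einstein by \cite[4.3]{GK}, whence $(\s_a)_{\Lambda'}$ is Einstein. I expect the main obstacle to be the bookkeeping in this final identification: one must confirm that restricting the global decomposition $W = W_a \oplus W_b$ to the attached nilradical produces a modification meeting all three adapted-basis conditions, and that the inherited inner products agree, so that \cite[4.3]{GK} applies verbatim rather than merely in spirit.
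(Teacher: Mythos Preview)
Your proposal is correct and follows essentially the same approach as the paper. The paper's own argument is terser: it simply records that in passing from $\s$ to $\s_a$ the data $\a$, $\Delta$, $\Lambda$ are unchanged, so any characteristic element $Z$ determines the same $\Lambda'$ and a well-defined attached subalgebra $(\s_a)_{\Lambda'}\subset\s_a$; the Einstein conclusion is then taken as a consequence of the framework in Sections~2.2--2.3, in particular the remark at the end of Section~2.3 that the \emph{attach} and \emph{associate} operations commute. Your write-up makes this commutativity argument explicit---attach first via \cite[5.3]{T2}, then associate via \cite[4.3]{GK}---and carefully checks that the restriction of the adapted basis and of $W=W_a\oplus W_b$ to $\n_{\Lambda'}$ yields a \emph{bona fide} associated modification, which is exactly the bookkeeping the paper leaves implicit.
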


For each such symmetric space, there are $\lfloor \frac{q-p}{2}\rfloor$ associated Einstein solvmanifolds (without loss of generality, $q\geq p$). For each associated solvmanifold, there are approximately $2^p - 1$ attached solvmanifolds (depending on the size of the Weyl group). 

\begin{remark}  The compact subgroup of isometries of $G/K$ is $M=\SO(q-p)$, which corresponds exactly to the block labeled $F$ in our matrix diagram.  The subspace $W$ is invariant under the elements of $M$. For the associated solvmanifold $S_a$, the compact subgroup of isometries is   $M_a = \SO(a)\SO(b) \subset \SO(q-p)$, elements for which now each of the separate components $W_a$ and $W_b$ are invariant (if $a=b$, $M_a$ also has a $\Z/2\Z$ factor, interchanging $W_a$ and $W_b$).  

Analogous statements hold for the unitary and symplectic symmetric spaces.  
\end{remark}

\begin{example}  We will consider the specific example of $S \cong S_{\emptyset}$, the solvmanifold which corresponds to the symmetric space $\SO(3,5)/\SO(3)\SO(5)$. From there we construct an associated solvmanifold $S_1$, still Einstein but no longer symmetric; for example, our solvmanifold is no longer negatively curved. Finally, we will choose a nontrivial subset 
$\Lambda' \subset \Lambda$ of fundamental roots, to study an attached  submanifold $(S_1)_{\Lambda'} \subset S_1$. 

On the Lie algebra level, we start with $\g = \so(3,5)$, $\k=\so(3)\so(5)$, and $\}$. As described above, $\a$  is made up of the matrices in $\g$ for which $B$ is diagonal and all other blocks are zero. The basic roots are given by $\Lambda = \{\omega_1, \omega_2 - \omega_1, \omega_3 - \omega_2 \}$. The set of positive roots is
$$\Delta^+ = \Lambda \cup \{\omega_2, \omega_3,  \omega_2 + \omega_1, \omega_3 + \omega_2, \omega_3 \pm \omega_1\}.$$
The vectors listed here give an adapted basis for $S$:
\begin{align} \a &= \text{span}\{F_{14}, F_{25}, F_{36}\}, \notag \\
 \n_{\omega_k} &= \text{span}\{U_{km} = F_{km}+ E_{3+k,m} \mid m=7,8\}, \quad k=1,2,3, \notag \\
\n_{\omega_j \pm \omega_i} &= \text{span}\{Y_{ij}^{\pm} = \tfrac{1}{\sqrt 2}(E_{ij} - F_{i,3+j} \pm F_{j,3+i} \mp E_{3+i,3+j})\}, \quad 1\leq i < j \leq 3. \notag  
\end{align}

In this example, there is only one non-isomorphic associated solvmanifold, $S_1$: take $a=b=1$ in the description above.  Then 
$W_a = \text{span}\{U_{17}, U_{27}, U_{37}\}$ and $W_b = \text{span}\{U_{18}, U_{28}, U_{38}\}$.   To construct our associated solvmanifold $S_1$, we replace $U_{i7}$ with $\sqrt{-1}U_{i7}$, for $i=1,2,3.$  Here we note that $S_1$ is different from $S$; we show we have introduced positive sectional curvature.  Let $X = \tfrac{1}{\sqrt 2}(U_{27} + U_{28})$ and $Y= \tfrac{1}{\sqrt 2}(U_{37} + U_{38})$. Then $[X,Y] = 0$.   Meanwhile, $U(X,Y)$ is a nonzero element of $\n_{\omega_3 - \omega_2}$, whereas  
$\langle U(X,X),U(Y,Y)\rangle =0$. Thus we get $K(X,Y) = |U(X,Y)|^2 > 0$. 

Next, we consider a characteristic element $Z$  in $\a$: we take $Z= H^1 + H^2 = F_{14} + 2F_{25} + 2F_{36}$. With this choice of $Z$, we find that  
$\Lambda' = \{\alpha \in \Lambda \mid \alpha(Z) =0\} = \{\omega_3 - \omega_2\}$. Now $\n_{\omega_3 - \omega_2}$ is removed.  We consider the same section as above,  $X = \tfrac{1}{\sqrt 2}(U_{27} + U_{28})$ and $Y= \tfrac{1}{\sqrt 2}(U_{37} + U_{38})$. One sees that  we still have $[X,Y]=0$, but now $U_{\Lambda'}(X,Y) = 0$. And because we must project from 
$\a_{\emptyset}$ to $\a_{\Lambda'}$,   we see $\langle U_{\Lambda'}(X,X),U_{\Lambda'}(Y,Y)\rangle > 0$.  Thus the section which has positive sectional curvature in $S_1$ has negative sectional curvature in $(S_1)_{\Lambda'}$. 
\end{example}

\subsection{\boldmath $SO(n,\bH)/U(n)$}\label{unitary}  We now consider the noncompact symmetric space $\SO(n,\bH)/\U(n)$, 
of rank $m =\lfloor \tfrac n2 \rfloor$. We have 
$$\g = \so(n, \bH) = \left\{ \begin{pmatrix} X & Y \\ -\overline{Y} & \overline{X} \end{pmatrix} \in \gl(2n, \C) \mid 
X = -X^t, \, Y = \overline{Y}^t \right\}.  $$
Within $\g$ we consider the maximal torus $\a = \{ H_j = \tfrac{i}{\sqrt 2} (E_{2j-1,2j} - E_{n+2j -1, n+2j}) \mid j=1,\dots, m\}$. 
We choose the set of positive roots 
\begin{align}\Delta^+ &= \{ \omega_j \pm \omega_k \mid 1 \leq j < k \leq m\} \cup \{2\omega_j \mid j=1, \dots,m\} \quad \text{(if $n$ even)} \notag \\
\Delta^+ &= \{ \omega_j \pm \omega_k \mid 1 \leq j < k \leq m\} \cup \{2\omega_j \mid j=1, \dots,m\} \cup \{\omega_j \mid j=1, \dots,m\}
\quad \text{(if $n$ odd)}. \notag 
\end{align}
For completeness, we describe the root spaces below (see \cite[4.2]{GK}):  
{\allowdisplaybreaks
\begin{align}
\n_{\omega_j \pm \omega_k} = \text{span}\{
A^{\pm}_{jk} = &\tfrac 12((E_{2j-1,2k-1} \mp E_{2j,2k} + E_{n+2j-1,n+2k-1}
\mp E_{n+2j,n+2k}) \notag
\\  +&i(\mp E_{2j-1,2k} - E_{2j,2k-1} \pm E_{n+2j-1,n+2k} +
E_{n+2j,n+2k-1})), \notag \\
B^{\pm}_{jk} = &\tfrac 12((E_{2j-1,2k} \pm E_{2j,2k-1} + E_{n+2j-1,n+2k} \pm
E_{n+2j,n+2k-1}) \notag
\\ +&i(\pm E_{2j-1,2k-1} - E_{2j,2k} \mp E_{n+2j-1,n+2k-1} +
E_{n+2j,n+2k})), \notag \\
C^{\pm}_{jk} = &\tfrac 12((E_{2j-1,n+2k} \mp E_{2j,n+2k-1} \mp E_{2k-1,n+2j}
+ E_{2k,n+2j-1}) \notag
\\ +&i(\mp E_{2j-1,n+2k-1} - E_{2j,n+2k} \pm E_{2k-1,n+2j-1} +
E_{2k,n+2j})), \notag \\
D^{\pm}_{jk} = &\tfrac 12((E_{2j-1,n+2k-1} + E_{2k-1,n+2j-1} \pm E_{2j,n+2k}
\pm E_{2k,n+2j}) \notag
\\ +&i(\pm E_{2j-1,n+2k} - E_{2j,n+2k-1} + E_{2k-1,n+2j} \mp
E_{2k,n+2j-1}))\}; \notag \\
\n_{2\omega_k} = \text{span}\{G_k = &\tfrac{1}{\sqrt 2}((E_{2k-1,n+2k-1} +
E_{2k,n+2k}) + i(E_{2k-1,n+2k} - E_{2k,n+2k-1}))\}; \notag \\
\text{(and if $n$ odd)} \quad  \n_{\omega_k} =  \text{span}\{X_k = &\tfrac{1}{\sqrt 2} 
((E_{2k,n} + E_{n+2k,2n}) +i(E_{2k-1,n} - E_{n+2k-1,2n})), \notag \\
Y_k = &\tfrac{1}{\sqrt 2}((E_{2k-1,n} + E_{n+2k-1,2n}) - i(E_{2k,n}
- E_{n+2k,2n})), \notag \\
Z_k = &\tfrac{1}{\sqrt 2}((E_{2k,2n} + E_{n,n+2k}) + i(E_{2k-1,2n}
-E_{n,n+2k-1})), \notag \\
W_k = &\tfrac{1}{\sqrt 2}((E_{2k-1,2n} + E_{n,n+2k-1}) - i(E_{2k,2n}
-E_{n,n+2k}))\}.\notag 
\end{align} }
In the case that $n$ is even, we can modify $\s$ by replacing 
each of the basis elements 
$B^-_{jk}$, $C^-_{jk}$, $A^+_{jk}$, $D^+_{jk}$ and $G_k$ 
by its respective product with $\sqrt{-1}$, an element of $\s^{\C}$, the
complexified Lie algebra. In the case that $n$ is odd, instead we modify $\s$ by replacing 
each of the basis elements
$X_k$, $Z_k$, $B^+_{jk}$, $C^+_{jk}$, $B^-_{jk}$, $C^-_{jk}$
by its respective product with $\sqrt{-1}$. 
 In either case, with this changed basis, we obtain a new Lie algebra $\s'$ 
associated to $\s$, and the corresponding simply-connected Riemannian solvmanifold
$S'$ is Einstein. 

 As with the previous examples, these new solvmanifolds are not isomorphic to their associated symmetric solvmanifolds. To see this, we show here that $S'$ admits some positive sectional curvature \cite{GK}. 
To construct a two-plane of positive curvature, we set 
$$X =\frac{1}{\sqrt 2}((A^-_{j,j+1})' + (B^-_{j,j+1})') \quad \text{and} \quad   
Y =\frac{1}{\sqrt 2} ((C^-_{j+1,j+2})' +(D^-_{j+1,j+2})').$$ 
Then one sees that  while $[X,Y] = 0$ and
$U(X,Y) = 0$, we have $U(X,X) = \frac{1}{\sqrt 2} (H_j - H_{j+1})$ and 
$U(Y,Y) =\frac{1}{\sqrt 2} (H_{j+1} - H_{j+2})$.  Thus
 $$ K(X,Y) = -\langle U(X,X),U(Y,Y) \rangle=-\langle \tfrac{1}{\sqrt 2} (H_j - H_{j+1}), \tfrac{1}{\sqrt 2} (H_{j+1} - H_{j+2})\rangle = \tfrac 12 \|H_{j+1}\| > 0.$$
 
 \begin{proposition} Let $S'$ be the solvmanifold {\sl associated to} a solvmanifold corresponding to a noncompact symmetric space of type $\SO(n,\bH)/\U(n)$, in the sense of \cite{GK}. Let $m =\lfloor \tfrac n2 \rfloor$.  Let 
${S'}_{\Lambda'}$ be an attached solvmanifold to $S'$ in the sense of \cite{T2}. 
Then $({S'}_{\Lambda'}, g')$ is an Einstein solvmanifold. 
 \end{proposition}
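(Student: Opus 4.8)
The plan is to deduce the statement from the two Ricci-preservation theorems already in hand, by exploiting the fact that the attached and associated constructions commute on $\s$. First note that the associated solvmanifold $S'$ is itself Einstein: this is established in the discussion preceding this proposition, via \cite[4.3]{GK}. Moreover, the passage from $\s$ to $\s'$ only rescales certain root vectors by $\sqrt{-1}$ inside $\g^\R = (\g)^\C$; since $[A, \sqrt{-1}\,X] = \alpha(A)\,(\sqrt{-1}\,X)$ for $X \in \g_\alpha$ and $A \in \a$, the maximal torus $\a$, the root system $\Delta$, the simple roots $\Lambda$, and the root-space decomposition are all unchanged, with each modified space $\n'_\alpha$ still attached to the root $\alpha$. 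Consequently Tamaru's attached construction applies to $\s'$ verbatim: a characteristic element $Z = c_{i_1}H^{i_1} + \dots + c_{i_k}H^{i_k} \in \a$ determines the same grading $\g^\R = \bigoplus_j (\g^\R)^j$, the same $\Lambda'$, and the Langlands data producing $(\s')_{\Lambda'} = \a_{\Lambda'} + (\n')_{\Lambda'}$.

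The key step is the identification $(\s')_{\Lambda'} = (\s_{\Lambda'})'$ as metric Lie algebras. Because $Z \in \a$ is unaffected by the $\sqrt{-1}$-rescaling and every modified basis vector stays in its original root space, the grading value $\alpha(Z)$ of each root space is unchanged; hence $\n_{\Lambda'} = \sum_{\alpha(Z) > 0}\n_\alpha$ and $(\n')_{\Lambda'} = \sum_{\alpha(Z)>0}\n'_\alpha$ consist of exactly the same root spaces, with the same selection of vectors rescaled by $\sqrt{-1}$. Thus forming the attached subalgebra and then applying the sign changes yields the same metric Lie algebra as applying the sign changes and then restricting. Granting this, the conclusion is immediate: by Tamaru \cite[5.3]{T2} the attached solvmanifold $S_{\Lambda'}$ is Einstein, and by \cite[4.3]{GK} its associated solvmanifold $(S_{\Lambda'})' = (\s')_{\Lambda'}$ has the same Ricci curvature as $S_{\Lambda'}$, hence is Einstein as well.

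The main obstacle is to verify that the explicit sign-change prescription defining $\s'$ (the rescaling of $B^-_{jk}, C^-_{jk}, A^+_{jk}, D^+_{jk}, G_k$ when $n$ is even, and of $X_k, Z_k, B^\pm_{jk}, C^\pm_{jk}$ when $n$ is odd) genuinely restricts to a valid associated construction on $\n_{\Lambda'}$ — that is, that the chosen adapted basis for $\n$ restricts to an adapted basis for $\n_{\Lambda'}$ and that $(\n_{\Lambda'})'$ remains closed under the bracket. This is bookkeeping rather than computation: since $\n_{\Lambda'}$ is a sum of full root spaces and both the adapted-basis conditions and the sign changes are imposed root-space by root-space, each property passes to the subalgebra automatically, which is precisely the commutativity claimed above. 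As a cross-check one can instead run Tamaru's Ricci-preservation argument (items (1)--(4) of the Preliminaries) directly on $\s'$: the identity $\ric^{\n'_\emptyset}(X) - \ric^{(\n')_{\Lambda'}}(X) = [H_0^\perp, X]$ holds for $\s'$ exactly as for $\s$, the $\sqrt{-1}$-signs cancelling in the curvature sums precisely as they do in \cite{GK}, giving $\ric^{\s'}(X,Y) = \ric^{(\s')_{\Lambda'}}(X,Y)$ and hence the Einstein property.
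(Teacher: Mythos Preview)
Your proof is correct and follows essentially the same approach as the paper's own proof: both argue that because the passage from $\s$ to $\s'$ leaves the maximal torus $\a$, the root system $\Delta$, and the simple roots $\Lambda$ unchanged, Tamaru's attached construction applies verbatim to $\s'$ for any choice of characteristic element $Z$. Your version is considerably more detailed---you spell out the commutativity $(\s')_{\Lambda'} = (\s_{\Lambda'})'$ and offer a direct cross-check via Tamaru's Ricci computation---whereas the paper's proof is terse and simply asserts that the data is unchanged and hence the subalgebra $\s'_{\Lambda'}$ is determined; but the underlying idea is the same.
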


\begin{proof} In the construction of our solvable Lie group $\s'$, the maximal torus $\a$, the root system $\Delta$, and the fundamental system $\Lambda$  are unchanged from $\s$.  We have a dual basis for $\a$, $\{H^1, H^2, \dots, H^m\}$ such that $H^i$ is dual to the $i^{th}$ element  of $\Lambda$. Each choice of characteristic element $Z = c_{i_1} H^{i_1} + \dots + c_{i_k} H^{i_k}$  (each $c_{i_j} \in \Z_+$), determines $\Lambda' \subset \Lambda$ and, in turn,  $\Lambda'$ determines an attached solvable metric Lie subalgebra $\s'_{\Lambda'} = \a_{\Lambda'} + \n'_{\Lambda'} \subset \s'$. 
\end{proof}
For each $n$, there is one associated Einstein solvmanifold, and for each associated solvmanifold, 
there are approximately $2^{m} - 1$ attached solvmanifolds (depending on the number of symmetries of the Dynkin diagram). 

 \begin{remark} The compact subgroup of isometries of $G/K$ is $M=\SO(4)$ (where the action permutes $A^{\pm}_{jk}$, $B^{\pm}_{jk}$, $C^{\pm}_{jk}$ and $D^{\pm}_{jk}$, preserving the plus or minus type). For the associated solvmanifold $S'$, the analogous compact subgroup of isometries is   $M'=\SO(2)\SO(2)$. 
 \end{remark}
%
%
%
%
%%%%%%%%%%%%%%%%%%%%%%%%%%%%%%%%%%%%%%%%%%%%%%%
\subsection{\boldmath $SL(n,\bH)/Sp(n)$}\label{symplectic}  Finally, we consider the noncompact symmetric space $\SL(n,\bH)/\Sp(n)$, of rank $n-1$. Here 
$$\g = \fsl(n,\bH) = \left\{ \begin{pmatrix} X & -\bar{Y} \\ Y & \bar{X} \end{pmatrix} \mid X, Y \in \gl(n,\C), \tr(X + \bar{X}) = 0\right \}.$$
Within $\g$ we have $\a = \{\diag(a_1,\dots,a_n, a_1, \dots, a_n) \in \gl(2n,\R) \mid \sum_{i=1}^n a_i = 0 \}.$
For an arbitrary element $H = \diag(a_1,\dots,a_n, a_1, \dots, a_n) \in\a$, define $\omega_j \in \a^*$ by $\omega_j(H) = a_j$. The positive roots are $\Delta^+ = \{\omega_k - \omega_j \mid 1 \leq j < k \leq n\}$. 
 The root spaces are described below (see \cite[4.3]{GK}): 
{\allowdisplaybreaks
\begin{align}
\n_{\omega_j -\omega_k} = \text{span}\{&A_{jk} = i\sqrt 2\,(F_{jk}-F_{n+j,n+k}),
~B_{jk} = i\sqrt 2\,(F_{j,n+k}+F_{n+j,k}), \notag \\
&C_{jk} = \sqrt 2\,(F_{j,n+k} - F_{n+j,k}), ~ D_{jk} = \sqrt 2\,(F_{jk} +F_{n+j,n+k})\}. \notag
\end{align} }
We modify $\s$ by replacing each $A_{jk}$ and $C_{jk}$ by their respective products with $\sqrt{-1}$, to get new elements of 
$\s^{\C}$  for every $1 \leq j < k \leq n$. With these changes, we obtain a new Lie algebra $\s'$ 
associated to $\s$, and the corresponding simply-connected Riemannian solvmanifold $S'$ is Einstein. 

We show that, since $S'$  admits some positive sectional curvature, it cannot be isomorphic to $S$, the symmetric solvmanifold \cite{GK}.
Let $X =\frac{1}{\sqrt 2}(A_{ij}' + B_{ij}')$ and 
$Y = \frac{1}{\sqrt 2}(C_{jk}' + D_{jk}')$.  Then we see 
$[X,Y] = 0$ and $U(X,Y) = 0$, while $U(X,X) = H_{ij}$ and $U(Y,Y) = H_{jk}$  
(here $H_{ij} = X_{ii}-X_{jj}+ X_{n+i,n+i}-X_{n+j,n+j}$ in $\a$). Thus 
$$K(X,Y) = -\langle U(X,X),U(Y,Y) \rangle = -\langle H_{ij}, H_{jk}\rangle >0.$$ 

%The following result is analogous to the previous propositions. 

 \begin{proposition} Let $S'$ be the solvmanifold {\sl associated to} a solvmanifold corresponding to a noncompact symmetric space of type $\SL(n,\bH)/\Sp(n)$, in the sense of \cite{GK}. Let 
${S'}_{\Lambda'}$ be an attached solvmanifold to $S'$ in the sense of \cite{T2}. 
Then $({S'}_{\Lambda'}, g')$ is an Einstein solvmanifold. 
 \end{proposition}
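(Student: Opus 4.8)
The plan is to follow the template of the $\SO(n,\bH)/\U(n)$ proposition above, reducing the Einstein property of $(S')_{\Lambda'}$ to Tamaru's theorem together with the Ricci-invariance of the associated construction. First I would record that forming $\s'$ from $\s$---replacing each $A_{jk}$ and $C_{jk}$ by $\sqrt{-1}$ times itself in $\g^\R = (\g)^\C$---leaves the maximal torus $\a$ untouched and changes the structure constants of $\n$ only by signs. Since $\sqrt{-1}\,X$ lies in the same $\ad\a$-eigenspace as $X$, the root system $\Delta = \{\omega_k - \omega_j \mid 1 \le j < k \le n\}$, the fundamental system $\Lambda$, the dual basis $\{H^1, \dots, H^{n-1}\}$, and the eigenvalues $\alpha(Z)$ of $\ad Z$ are all unchanged. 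Hence every characteristic element $Z = c_{i_1}H^{i_1} + \dots + c_{i_k}H^{i_k}$ induces the same grading of $\g^\R$ and the same subset $\Lambda' \subset \Lambda$ as it does for $\s$, so the attached metric subalgebra $\s'_{\Lambda'} = \a_{\Lambda'} + \n'_{\Lambda'} \subset \s'$ is well defined, with closure under the bracket following from the preserved grading.

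For the Einstein conclusion I would invoke the commutativity of the two operations noted earlier in the text: applying the attached construction and then the associated construction yields the same metric Lie algebra as applying them in the opposite order. Thus $\s'_{\Lambda'}$ coincides with $(\s_{\Lambda'})'$, the algebra associated to Tamaru's attached subalgebra $\s_{\Lambda'}$ of the symmetric solvmanifold $\s_\emptyset$ corresponding to $\SL(n,\bH)/\Sp(n)$. Tamaru's theorem \cite[5.3]{T2} shows $\s_{\Lambda'}$ is Einstein, since $\s_\emptyset$ is symmetric (hence Einstein) and $\s_{\Lambda'}$ is attached to it; and the sign-change invariance of the Ricci tensor \cite[4.3]{GK} shows that passing to the associated algebra preserves the Einstein property. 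Composing these gives that $\s'_{\Lambda'}$ is Einstein. As a cross-check I would also run Tamaru's Ricci-preservation computation directly on $\s' = \s'_\emptyset$: the four observations underlying it---the vanishing of $[\a_{\Lambda'},\g^0]$, Wolter's Lemma, and the splitting $H_0 = H_0' + H_0^\perp$---depend only on the bracket relations and the orthogonal root-space decomposition, both of which survive the associated construction, so $\ric^{\s'_{\Lambda'}}(X,Y) = \ric^{\s'_\emptyset}(X,Y)$ for all $X,Y \in \s'_{\Lambda'}$.

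The main obstacle is essentially bookkeeping rather than substance: Tamaru's construction is phrased via parabolic subalgebras of a \emph{semisimple} Lie algebra, whereas $\s'$ is a sign-modified subalgebra of the real form $\g^\R$ and is not itself the Iwasawa part of a semisimple group, so one cannot cite \cite[5.3]{T2} verbatim. The reduction through the commuting property sidesteps this by transferring all of the semisimple-structure work to the symmetric case $\s_{\Lambda'}$, which is why I would present that route as the main argument; the only genuine verification left is that the associated construction for $\SL(n,\bH)/\Sp(n)$ leaves the grading data intact, and that is immediate from the fact that only the signs of structure constants change.
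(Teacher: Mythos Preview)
Your proposal is correct and follows the paper's approach: the paper in fact gives no separate proof for this proposition, relying on the template of the $\SO(n,\bH)/\U(n)$ case, whose proof consists exactly of your first paragraph---observing that $\a$, $\Delta$, $\Lambda$, and the dual basis are unchanged so that $\s'_{\Lambda'}$ is well defined. Your second and third paragraphs make explicit the commutativity-of-operations argument that the paper states once at the end of \S2.3 and then leaves implicit thereafter, so you have supplied the details the paper omits rather than taken a different route.
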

For each such symmetric space, there is one associated Einstein solvmanifold, and for each associated solvmanifold, 
there are approximately $2^{n-1} - 1$ attached solvmanifolds (depending on the number of symmetries of the Dynkin diagram). 

 \begin{remark} As in the previous family of examples, the compact subgroup of isometries of $G/K$ is $M=SO(4)$ (where the action permutes $A_{jk}$, $B_{jk}$, $C_{jk}$ and $D_{jk}$). For the associated solvmanifold $S'$, the analogous compact subgroup of isometries  is   $M'=\SO(2)\SO(2)$. 
 \end{remark}
%%%%%%%%%%%%%%%%%%%%%%%%%%%%%%%%%%%%%%%%%%%%%%%%%%%%%%%%%%%%%%%%%%%%%%%%%%%%%%%%

\end{document}